\DeclareMathOperator{\rank}{rank}
\DeclareMathOperator{\cv}{conv}
\DeclareMathOperator*{\argmax}{arg\,max}
\newcommand{\Rbb}{\mathbb{R}}
\newcommand{\Rn}{\mathbb{R}^n}
\newcommand{\Zbb}{\mathbb{Z}}
\newcommand{\pin}[2]{\langle#1,#2\rangle}
\newcommand{\st}{\text{s.t.\ }}
\def\keywords#1{\noindent{\bf Keywords:}\enspace\ignorespaces{#1}}
\begin{document}

\mainmatter

\title{A coordinate ascent method for solving semidefinite relaxations of non-convex quadratic integer programs}
\titlerunning{A coordinate ascent method for solving SDP relaxations of non-convex QIP}
\author{Christoph Buchheim\inst{1}
  \and
Maribel Montenegro\inst{1}
  \and
Angelika Wiegele\inst{2}}
\institute{Fakult\"at f\"ur Mathematik, Technische Universit\"at Dortmund,
  Germany, \email{christoph.buchheim@tu-dortmund.de, maribel.montenegro@math.tu-dortmund.de}
\and Department of Mathematics, Alpen-Adria-Universit\"at Klagenfurt, Austria \email{angelika.wiegele@aau.at}}
\date{}
\maketitle

\begin{abstract}
  We present a coordinate ascent method for a class of semidefinite
  programming problems that arise in non-convex quadratic integer
  optimization. These semidefinite programs are characterized by a
  small total number of active constraints and by low-rank constraint
  matrices. We exploit this special structure by solving the dual
  problem, using a barrier method in combination with a
  coordinate-wise exact line search. The main ingredient of our
  algorithm is the computationally cheap update at each iteration and
  an easy computation of the exact step size. Compared to interior
  point methods, our approach is much faster in obtaining strong dual
  bounds. Moreover, no explicit separation and reoptimization is
  necessary even if the set of primal constraints is large, since in
  our dual approach this is covered by implicitly considering all
  primal constraints when selecting the next coordinate.

  \medskip

  \keywords{Semidefinite programming, non-convex quadratic integer
    optimization, coordinate descent method}

\end{abstract}

\section{Introduction}

The importance of Mixed-Integer Quadratic Programming (MIQP) lies in
both theory and practice of mathematical optimization. On one hand, a
wide range of problems arising in practical applications can be
formulated as MIQP. On the other hand, it is the most natural
generalization of Mixed-Integer Linear Programming (MILP).  However,
it is well known that MIQP is NP-hard, as it contains
MILP as a special case. Moreover, contrarily to what happens in MILP,
the hardness of MIQP is not resolved by relaxing the integrality
requirement on the variables: while convex quadratic problems can be
solved in polynomial time by either the ellipsoid method
\cite{KozlovTarasov(1980)} or interior point methods
\cite{KapoorVaidya(1986),YeTse(1989)}, the general problem of
minimizing a non-convex quadratic function over a box is NP-hard, even
if only one eigenvalue of the Hessian is negative~\cite{pardalosvavasis:91}.

Buchheim and Wiegele \cite{BuchheimWiegele(2013)} proposed the use of
semidefinite relaxations and a specialized branching scheme (Q-MIST)
for solving unconstrained non-convex quadratic minimization problems
where the variable domains are arbitrary closed subsets of
$\Rbb$. Their work is a generalization of the well-known semidefinite
programming approach to the maximum cut problem or, equivalently, to
unconstrained quadratic minimization over variables in the domain
$\{-1,1\}$.
Q-MIST needs to solve a semidefinite program (SDP) at
each node of the branch-and-bound tree, which can be done using any
standard SDP solver. In~\cite{BuchheimWiegele(2013)}, an interior
point method was used for this task, namely the CSDP
library~\cite{csdp}. It is well-known that interior point algorithms
are theoretically efficient to solve SDPs, they are able to solve
small to medium size problems with high accuracy, but they are memory
and time consuming for large scale instances.

A related approach to solve the same kind of non-convex quadratic
problems was presented by Dong~\cite{HongboDong(2014)}. A convex
quadratic relaxation is produced by means of a cutting surface
procedure, based on multiple diagonal perturbations. The separation
problem is formulated as a semidefinite problem and is solved by
coordinate-wise optimization methods. More precisely, the author
defines a barrier problem and solves it using coordinate descent
methods with exact line search. Due to the particular structure of the
problem, the descent direction and the step length can be computed by closed formulae, and fast updates are
possible using the Sherman-Morrison formula. Computational results
show that this approach produces lower bounds as strong as the ones
provided by \mbox{Q-MIST} and it runs much faster for instances of large
size.

In this paper, we adapt and generalize the coordinate-wise
approach of~\cite{HongboDong(2014)} in order to solve the
dual of the SDP relaxation arising in the Q-MIST approach. In our
setting, it is still true that an exact coordinate-wise line search
can be performed efficiently by using a closed-form expression, based
on the Sherman-Morrison formula. Essentially, each iteration of the
algorithm involves the update of one coordinate of the vector of dual
variables and the computation of an inverse of a matrix that changes
by a rank-two constraint matrix when changing the value of the
dual variable.
Altogether, our
approach fully exploits the specific structure of our problem, namely a small total
number of (active) constraints and low-rank constraint matrices of
the semidefinite relaxation. Furthermore, in our model the set of dual variables
can be very large, so that the selection of the best
coordinate requires more care than
in~\cite{HongboDong(2014)}. However, our new approach is much more
efficient than the corresponding separation approach for the primal
problem described in~\cite{BuchheimWiegele(2013)}.

\section{Preliminaries}
\label{Sec:qmist}

We consider non-convex quadratic mixed-integer optimization problems of the form
\begin{align}
\label{P:Quadratic}
\min &\quad x^\top \hat Q x +\hat l^\top x+ \hat c\nonumber\\
\st &\quad x\in D_1\times\dots\times D_n\;,
\end{align}
where $\hat Q\in \Rbb^{n\times n}$ is symmetric but not necessarily positive
semidefinite,~$\hat l\in
\Rn$, $\hat c\in \Rbb$, and $D_i=\{l_i,\dots,u_i\} \subseteq\Zbb$ is finite for
all~$i=1,\dots,n$. Buchheim and Wiegele
\cite{BuchheimWiegele(2013)} have studied the more general case where
each~$D_{i}$ is an arbitrary closed subset of~$\Rbb$. The authors have implemented a branch-and-bound approach
called Q-MIST, it mainly consists in reformulating
Problem~\eqref{P:Quadratic} as a semidefinite optimization problem and solving a
relaxation of the transformed problem within a branch-and-bound
framework.
In this section, first we describe how to obtain a semidefinite relaxation of 
Problem~\eqref{P:Quadratic}, then we formulate it in a matrix form and compute 
the dual problem.

\subsection{Semidefinite relaxation}
Semidefinite relaxations for quadratic optimization problems can already be found
in an early paper of Lov\'asz in~1979~\cite{Lovasz(1979)}, but it was
not until the work of Goemans and Williamson
in~1995~\cite{GoemansWilliamson(1995)} that they started to catch
interest.  The basic idea is as follows: given any vector~$x\in\Rn$,
the matrix~$xx^\top\in \Rbb^{n\times n}$ is rank-one, symmetric and
positive semidefinite. In particular, also the augmented matrix
\[
\ell(x):=\begin{pmatrix}
1\\
x
\end{pmatrix}
\begin{pmatrix}
1\\
x
\end{pmatrix}^\top
=
\begin{pmatrix}
1~ & x^\top\\
x~ & xx^\top
\end{pmatrix}
\in\Rbb^{(n+1)\times(n+1)}
\]
is positive semidefinite. This well-known fact leads to semidefinite
reformulations of various quadratic problems. Defining a matrix
\[
Q
:=
\begin{pmatrix}
\hat c & \tfrac{1}{2}\hat l^\top\\
\tfrac{1}{2}\hat l & \hat Q
\end{pmatrix},
\]
Problem~\eqref{P:Quadratic} can be rewritten as
\begin{align*}
\min & \quad \pin{Q}{X} \nonumber\\
\st & \quad X\in \ell(D_1\times\dots\times D_n)\;,
\end{align*}
so that it remains to investigate the set~$\ell(D_1\times\dots\times
D_n)$. The following result was proven in~\cite{BuchheimWiegele(2013)}.
\begin{theorem}
\label{T:setl(x)}
Let $X\in \Rbb^{(n+1)\times(n+1)}$ be symmetric. Then $X\in \ell(D_1\times\dots\times D_n)$ if and only if
\begin{itemize}
\item[(a)] $(x_{i0},x_{ii})\in P(D_i):=\cv\{(u,u^2)\mid u\in D_i\}$ for all $i=1,\dots,n$,  
\item[(b)] $x_{00}=1$,
\item[(c)] $\rank(X)=1$, and
\item[(d)] $X\succeq 0$.
\end{itemize}
\end{theorem}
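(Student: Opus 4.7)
\medskip
\noindent\textbf{Proof plan.} I would split the statement into the two implications and handle the easy direction first. For the forward direction, assume $X=\ell(x)$ for some $x\in D_1\times\dots\times D_n$. Reading off the entries of $\ell(x)$ directly gives $x_{00}=1$, $x_{i0}=x_i$, and $x_{ii}=x_i^2$, so $(x_{i0},x_{ii})=(x_i,x_i^2)\in\{(u,u^2)\mid u\in D_i\}\subseteq P(D_i)$; moreover $\ell(x)$ is by construction a rank-one positive semidefinite matrix. This establishes (a)--(d).

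For the converse, suppose $X$ is symmetric and satisfies (a)--(d). From (c) and (d) I can factor $X=vv^\top$ for some $v\in\Rbb^{n+1}$ (the standard spectral argument: a rank-one PSD matrix is the outer product of an eigenvector scaled by the square root of its nonzero eigenvalue). Using (b), $v_0^2=x_{00}=1$, so after possibly replacing $v$ by $-v$ (which does not change $vv^\top$) I may assume $v_0=1$. Writing $v=\binom{1}{x}$ with $x\in\Rbb^n$ yields $X=\ell(x)$, and the entries satisfy $x_{i0}=x_i$ and $x_{ii}=x_i^2$.

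It remains to deduce $x_i\in D_i$ from condition~(a), and this is the one nontrivial step. The point $(x_i,x_i^2)$ lies on the parabola $y=u^2$ and, by (a), also in the convex hull of the finitely many parabola points $\{(u,u^2)\mid u\in D_i\}$. Writing $(x_i,x_i^2)=\sum_k\lambda_k(u_k,u_k^2)$ with $\lambda_k\ge 0$, $\sum_k\lambda_k=1$, and $u_k\in D_i$, the first coordinate gives $x_i=\sum_k\lambda_k u_k$ while the second gives $x_i^2=\sum_k\lambda_k u_k^2$. Strict convexity of $u\mapsto u^2$ forces $\bigl(\sum_k\lambda_k u_k\bigr)^2\le\sum_k\lambda_k u_k^2$ with equality only if all $u_k$ with $\lambda_k>0$ coincide. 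Hence $x_i$ equals that common value and therefore lies in $D_i$, completing the proof.

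The expected obstacle is precisely this last step: everything else is linear-algebraic bookkeeping, while showing that a parabola point in the convex hull of parabola points must itself be a vertex requires the strict-convexity argument above. Once formulated, it is short, but it is the only place where the geometry of $P(D_i)$ (and hence the finiteness or at least closedness of $D_i$) genuinely enters.
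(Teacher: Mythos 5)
Your proof is correct. Note that this paper does not actually contain a proof of Theorem~\ref{T:setl(x)}; it is quoted from Buchheim and Wiegele~\cite{BuchheimWiegele(2013)}, so there is no in-paper argument to compare against line by line. Your route is the natural one and matches the spirit of the original: the forward direction is immediate from the entries of $\ell(x)$, the converse uses rank-one PSD factorization $X=vv^\top$ with $v_0=1$ forced by $x_{00}=1$, and the only substantive step is exactly the one you isolate, namely that a point of the form $(x_i,x_i^2)$ lying in $\cv\{(u,u^2)\mid u\in D_i\}$ must have $x_i\in D_i$. Your strict-convexity (zero-variance) argument for that step is sound, since any element of the convex hull is a finite convex combination of the generators, and equality in Jensen's inequality for $u\mapsto u^2$ forces all generators with positive weight to coincide. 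One remark: in the original reference the theorem is stated for arbitrary closed $D_i\subseteq\Rbb$, where the same argument still applies verbatim because convex-hull elements remain finite convex combinations; closedness of $D_i$ is only needed so that the recovered value actually lies in $D_i$ in limiting formulations, not for the combination argument itself.
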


\noindent
We derive that the following optimization problem is a
convex relaxation of~\eqref{P:Quadratic}, obtained by dropping
the rank-one constraint of Theorem~\ref{T:setl(x)}\,(c):
\begin{align}
\label{P:SDP-relax}
\min \quad \pin{Q}{X}& \nonumber\\ \st \, (x_{i0},x_{ii})&\in P(D_i)
\quad \forall i=1,\dots n\\ x_{00}&=1 \nonumber\\ X&\succeq0 \nonumber
\end{align}
This is an SDP, since the
constraints~$(x_{i0},x_{ii})\in P(D_i)$ can be replaced by a set of
linear constraints, as discussed in the next section.

\subsection{Matrix formulation}

In the case of finite~$D_i$ considered here, the set~$P(D_{i})$ is a polytope in~$\Rbb^2$ with~$|D_i|$ many extreme
points. It can thus be described equivalently by a set of~$|D_i|$ linear
inequalities.
\begin{lemma}
  For~$D_i=\{l_i,\dots,u_i\}$, the polytope~$P(D_i)$ is completely described by lower
  bounding facets $-x_{ii}+(j+(j+1))x_{0i}\leq j(j+1)$ for
  $j=l_{i},l_{i}+1,\dots,u_{i}-1$ and one upper bounding facet
  $x_{ii}-(l_{i}+u_{i})x_{0i}\leq -l_iu_i$.
\end{lemma}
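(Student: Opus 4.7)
The plan is to exploit the fact that $P(D_i)$ is a two-dimensional polytope whose vertices lie on the strictly convex parabola $y=x^{2}$, and then read off its facets directly from the geometry.

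First I would argue that the $|D_i|$ points $(u,u^2)$ for $u\in D_i$ are in convex position: since $x\mapsto x^{2}$ is strictly convex, no point $(u,u^2)$ can be written as a nontrivial convex combination of the others, so each is an extreme point of $P(D_i)$. Consequently, $P(D_i)\subset\Rbb^2$ is a polygon with exactly $|D_i|$ vertices, hence exactly $|D_i|$ edges (facets). Sorting the vertices by their first coordinate, the boundary of $P(D_i)$ splits into a lower chain consisting of the $u_i-l_i$ segments joining consecutive points $(j,j^2)$ and $(j+1,(j+1)^2)$ for $j=l_i,\dots,u_i-1$, and a single upper chord joining $(l_i,l_i^2)$ with $(u_i,u_i^2)$; strict convexity of the parabola guarantees that intermediate vertices lie strictly below the upper chord and strictly above any non-adjacent lower chord, so these really are the $|D_i|$ facets.

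Next I would compute the equation of each facet. For the lower segment through $(j,j^2)$ and $(j+1,(j+1)^2)$, the slope is $(j+1)^2-j^2=j+(j+1)$, giving the line $x_{ii}=(j+(j+1))x_{0i}-j(j+1)$; since the remaining vertices of $P(D_i)$ lie above this line (convexity of $y=x^2$), $P(D_i)$ satisfies
\[
-x_{ii}+(j+(j+1))x_{0i}\le j(j+1),\qquad j=l_i,\dots,u_i-1.
\]
For the upper chord through $(l_i,l_i^2)$ and $(u_i,u_i^2)$, the slope is $l_i+u_i$, the line is $x_{ii}=(l_i+u_i)x_{0i}-l_iu_i$, and all other vertices lie below it, yielding $x_{ii}-(l_i+u_i)x_{0i}\le -l_iu_i$.

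Finally, since each listed inequality is tight on two vertices and strict on all others, each describes a distinct facet of $P(D_i)$; together they account for all $|D_i|$ facets, so they give a complete linear description. The only step that needs genuine care is the convex-position argument and the verification that the listed edges actually form the full boundary, which is where strict convexity of $x\mapsto x^2$ does the real work; the remaining calculations of slopes and intercepts are routine.
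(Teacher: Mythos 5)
Your proof is correct, and in fact the paper states this lemma without giving any proof, so there is nothing to compare against: your argument --- the points $(u,u^2)$, $u\in D_i$, are in convex position by strict convexity of $x\mapsto x^2$, the lower hull consists of the $u_i-l_i$ edges between consecutive points while the upper hull is the single chord from $(l_i,l_i^2)$ to $(u_i,u_i^2)$, each listed inequality is valid and tight at exactly two vertices (e.g.\ via $k^2-(2j+1)k+j(j+1)=(k-j)(k-j-1)>0$ for $k\notin\{j,j+1\}$), and the count $(u_i-l_i)+1=|D_i|$ shows these exhaust all facets --- is the standard and complete way to establish the claim. The only caveat is that the step ``$|D_i|$ vertices, hence $|D_i|$ facets, hence the list is complete'' needs $P(D_i)$ to be two-dimensional, i.e.\ $u_i\ge l_i+2$; for $|D_i|\le 2$ the listed inequalities only cut out the affine hull of $P(D_i)$, so the statement itself requires this non-degeneracy. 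Your implicit assumption here matches the paper's, which likewise treats $P(D_i)$ as two-dimensional in the proof of Lemma~\ref{Lem:ActiveIneq}.
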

Exploiting~$x_{00}=1$, we may rewrite the polyhedral description
of~$P(D_i)$ presented in the previous lemma as
\begin{align*}
  (1-j(j+1))x_{00}-x_{ii}+(j+(j+1))x_{0i} & \leq 1, ~ j=l_{i},l_{i}+1,\dots,u_{i}-1\\
  (1+l_iu_i)x_{00}+x_{ii}-(l_{i}+u_{i})x_{0i} & \leq 1\;.
\end{align*}
We write the resulting inequalities in matrix form as
$ \pin{A_{ij}}{X}\leq 1$.
To keep analogy with the facets, the index~$ij$ represents the inequalities
corresponding to lower bounding facets if $j=l_{i},l_{i}+1,\dots,u_{i}-1$
whereas~$j= u_{i}$ corresponds to the upper facet; see
Figure~\ref{Fig:inequality}.

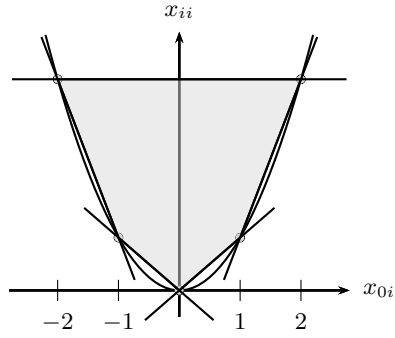
\begin{figure}
\centering
\psset{xunit=0.8\psunit,yunit=0.7\psunit}
\begin{pspicture}[showgrid=false](-2.8,-0.5)(2.8,5)
\psaxes[ticks=x,labels=x]{->}(0,0)(-2.8,-0.5)(2.8,4.9)[$x_{0i}$,0][$x_{ii}$,90]
  \psplot{-2.2}{2.2}{x 2 exp}
  \pstGeonode[PointSymbol=o,PointName=none,CurveType=polygon,fillstyle=solid,fillcolor=black!15,opacity=0.5](0,0){A}(1,1){B}(2,4){E}(-2,4){D}(-1,1){C}
\pstLineAB[nodesep=-0.6]{A}{B}
\pstLineAB[nodesep=-0.6]{A}{C}
\pstLineAB[nodesep=-0.6]{D}{E}
\pstLineAB[nodesep=-0.6]{D}{C}
\pstLineAB[nodesep=-0.6]{B}{E}	  
\end{pspicture}
\caption{\label{Fig:inequality}\small The polytope
  $P(\{-2,-1,0,1,2\})$. Lower bounding facets are indexed, from left to right,
  by $j= -2,-1,0,1$, the upper bounding facet is indexed by $2$.}
\end{figure}

Moreover, we write the constraint~$x_{00}=1$ in 
matrix form as $\pin{A_0}{X}=1$, where~$A_0:=e_0e_0^\top$. 
In summary, Problem~\eqref{P:SDP-relax} can now be stated as
\begin{align}
\label{P:SDP-matrix}
\min~~\quad \pin{Q}{X} \nonumber \\
\st \quad \pin{A_{0}}{X}&=1 \\ 
\pin{A_{ij}}{X}&\leq 1 \quad\forall j=l_i,\dots,u_i, \forall i=1,\dots,n \nonumber \\
X&\succeq0. \nonumber
\end{align}
The following simple observation is crucial for our algorithm
presented in the following section.
\begin{lemma}
  \label{lemma:rank}
  The constraint matrix~$A_0$ has rank one.
  All constraint matrices~$A_{ij}$ have rank one or two. The rank
  of~$A_{ij}$ is one if and only if~$j=u_i$ and $u_i-l_i=2$.
\end{lemma}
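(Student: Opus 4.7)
The plan is to work directly from the coefficient form of the inequalities. For each $A_{ij}$, only three entries of $X$ appear in the corresponding inner product, namely $x_{00}$, $x_{ii}$, and $x_{0i}=x_{i0}$. Writing out $\pin{A_{ij}}{X}$ as a sum over matrix entries and using symmetry, we see that $A_{ij}$ is supported on the principal $2\times 2$ submatrix indexed by $\{0,i\}$, with all other entries zero. Hence $\rank(A_{ij})$ equals the rank of that $2\times 2$ block, which is determined by a single determinant.

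First, $A_0=e_0e_0^\top$ has rank one by definition. For a lower bounding facet indexed by $j\in\{l_i,\dots,u_i-1\}$, reading off the coefficients gives the block
\[
\begin{pmatrix} 1-j(j+1) & \tfrac12(2j+1) \\ \tfrac12(2j+1) & -1 \end{pmatrix},
\]
whose determinant I would compute as $-(1-j(j+1)) - \tfrac14(2j+1)^2 = -\tfrac54$, independent of $j$. Since this is never zero, every lower-bounding $A_{ij}$ has rank exactly two.

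For the upper bounding facet $j=u_i$, the same procedure yields the block
\[
\begin{pmatrix} 1+l_iu_i & -\tfrac12(l_i+u_i) \\ -\tfrac12(l_i+u_i) & 1 \end{pmatrix},
\]
with determinant $1+l_iu_i-\tfrac14(l_i+u_i)^2 = 1-\tfrac14(u_i-l_i)^2$. This vanishes iff $(u_i-l_i)^2=4$, i.e.\ iff $u_i-l_i=2$ (recall $u_i>l_i$). The block is never the zero matrix (the $(i,i)$ entry is $1$), so in this case the rank drops from two to exactly one.

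The main thing to get right is the bookkeeping in step one: making sure that the off-diagonal coefficient of $x_{0i}$ is split symmetrically as two entries of value $\tfrac12(2j+1)$ (respectively $-\tfrac12(l_i+u_i)$) so that $\pin{A_{ij}}{X}$ reproduces the linear functional on the left-hand side of each inequality. Once this bookkeeping is correct, both determinant computations are routine and the three cases of the lemma fall out immediately.
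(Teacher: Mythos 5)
Your proof is correct and is the natural argument; the paper in fact states this lemma without proof, and your verification---restricting each $A_{ij}$ to its principal $2\times 2$ block on the indices $\{0,i\}$ with the symmetric split of the $x_{0i}$ coefficient (consistent with the paper's convention, e.g.\ $(A_{iu_i})_{0i}=-\tfrac12(l_i+u_i)$ in the starting-point construction), and computing the determinants $-\tfrac54$ for lower facets and $1-\tfrac14(u_i-l_i)^2$ for the upper facet---is exactly the intended routine computation. The bookkeeping and the case analysis (block nonzero, hence rank one precisely when the upper-facet determinant vanishes, i.e.\ $u_i-l_i=2$) are all in order.
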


\subsection{Dual problem}

In order to derive the dual problem of~\eqref{P:SDP-matrix}, we define
\[
\mathcal{A}(X):=
\begin{pmatrix}
\pin{A_{0}}{X}\\ 
\pin{A_{ij}}{X}_{j\in \{l_i,\dots,u_i\}, i\in \{1, \dots, n\}}
\end{pmatrix}
\]
and associate a dual variable $y_{0}\in \Rbb$ with the
constraint $\pin{A_{0}}{X}=0$ as well as dual variables $y_{ij}\leq 0$ with
$\pin{A_{ij}}{X}\leq 1$, for~$j\in
\{l_{i},\dots,u_{i}\}$ and~$i\in \{1,\dots,n\}$. We then define $y\in
\Rbb^{m+1}$ as
\[
y:=
\begin{pmatrix}
y_0\\
(y_{ij})_{j\in \{l_i,\dots,u_i\}, i\in \{1, \dots, n\}}
\end{pmatrix}.
\]
The dual semidefinite program of Problem~\eqref{P:SDP-matrix} is
\begin{align}
\label{P:Dsdp}
\max~~~\quad \pin{b}{y}\quad & \nonumber \\
\st \quad  Q - \mathcal{A}^\top y &\succeq 0 \\
y_{0} &\in \Rbb \nonumber\\
y_{ij}&\leq0 \quad \forall j=l_{i},\dots,u_i, \forall i=1,\dots, n\nonumber,
\end{align}
the vector $b\in \Rbb^{m+1}$ being the all-ones vector. It is easy to
verify that the primal problem~\eqref{P:SDP-matrix} is strictly feasible
if~$|D_i|\ge 2$ for all~$i=1,\dots,n$, so that strong
duality holds in all non-trivial cases.

We conclude this section by emphasizing some characteristics of any
feasible solution of Problem~\eqref{P:SDP-matrix}.
\begin{lemma}
\label{Lem:ActiveIneq}
Let $X^*$ be a feasible solution of
Problem~\eqref{P:SDP-matrix}. For~$i\in\{1,\dots,n\}$, consider the
active set
\[
\mathscr{A}_i=\{j\in\{l_i,\dots,u_i\}\mid
\pin{A_{ij}}{X^*}=1\}
\]
corresponding to variable~$i$. Then
\begin{itemize}
\item[(i)]  for all~$i\in\{1,\dots,n\}$, $|\mathscr{A}_i|\le 2$, and
\item[(ii)] if $|\mathscr{A}_i|=2$, then
  $x_{ii}^*={x_{0i}^*} ^2$ and $x_{i0}^*\in D_i$.
\end{itemize}
\end{lemma}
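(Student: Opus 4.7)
The plan is to interpret the active set $\mathscr{A}_i$ geometrically as the set of facets of $P(D_i)$ containing the point $(x_{0i}^{*},x_{ii}^{*})\in\Rbb^{2}$, and then to exploit elementary convex geometry of the polygon $P(D_i)$.

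First, I would record two standing facts. Substituting $x_{00}^{*}=1$ into the inequality $\pin{A_{ij}}{X^{*}}\le 1$ simply undoes the substitution made before Figure~\ref{Fig:inequality}, so this inequality is exactly the corresponding facet inequality of $P(D_i)$ evaluated at $(x_{0i}^{*},x_{ii}^{*})$; in particular, $\mathscr{A}_i$ is in bijection with the facets of $P(D_i)$ through the point $(x_{0i}^{*},x_{ii}^{*})$. Moreover, since $X^{*}\succeq 0$ together with $x_{00}^{*}=1$ implies nonnegativity of the $2\times 2$ principal minor on indices $\{0,i\}$, we obtain $x_{ii}^{*}\ge (x_{0i}^{*})^{2}$, i.e.\ the point $(x_{0i}^{*},x_{ii}^{*})$ lies on or above the parabola $y=x^{2}$.

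For part~(i), I would invoke the fact that any point of a two-dimensional polytope is contained in at most two of its facets, because each vertex of a convex polygon is the endpoint of exactly two edges; three distinct edges cannot meet at a single point. (In the degenerate case $|D_i|=2$ the polygon $P(D_i)$ collapses to a segment and the two listed inequalities coincide, still giving $|\mathscr{A}_i|\le 2$.)

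For part~(ii), I would show that $|\mathscr{A}_i|=2$ forces $(x_{0i}^{*},x_{ii}^{*})$ to be a vertex of $P(D_i)$: lying on two distinct edges of the polygon, it must be their common endpoint. Since $P(D_i)=\cv\{(u,u^{2})\mid u\in D_i\}$ and the points $(u,u^{2})$ with $u\in D_i$ are in strictly convex position on the parabola, the vertex set of $P(D_i)$ is precisely $\{(u,u^{2})\mid u\in D_i\}$. Hence there is some $j\in D_i$ with $x_{0i}^{*}=j$ and $x_{ii}^{*}=j^{2}=(x_{0i}^{*})^{2}$, and symmetry of $X^{*}$ gives $x_{i0}^{*}=x_{0i}^{*}\in D_i$. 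The main obstacle is the small case analysis needed to confirm that two distinct facet-defining lines of $P(D_i)$ never meet in the relative interior of a facet: the PSD-induced inequality $x_{ii}^{*}\ge (x_{0i}^{*})^{2}$ from the first step is what rules out the problematic intersection of two non-adjacent lower facets, which lies strictly below the parabola.
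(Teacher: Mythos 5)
Your proof is correct and follows essentially the same route as the paper: both argue via the two-dimensional geometry of $P(D_i)$ -- a feasible point lies on at most two facets, and two active facets force it to be a vertex $(u,u^2)$ with $u\in D_i$, giving $x_{ii}^*=(x_{0i}^*)^2$ and $x_{i0}^*\in D_i$. The additional PSD-derived bound $x_{ii}^*\ge (x_{0i}^*)^2$ is a harmless extra; it is not needed once you observe that feasibility already places $(x_{0i}^*,x_{ii}^*)$ inside the polytope, so an active facet-defining inequality puts the point on that facet.
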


\begin{proof}
The polytope $P(D_i)$ is two-dimensional with non-degenerate vertices.
Due to the way the inequalities $\pin{A_{ij}}{X}\leq 1$ are defined it
is impossible to have more than two inequalities intersecting at one
point. Therefore, a given
point~$(x_{ii},x_{i0})\in P(D_i)$ satisfies zero, one, or two
inequalities with equality. In the last case, we
have~$x_{ii}=x_{i0}^2$ by construction, which implies~$x_{i0}\in D_i$.
\qed
\end{proof}

\noindent
For the dual problem~\eqref{P:Dsdp}, Lemma~\ref{Lem:ActiveIneq}\,(i)
means that at most~$2n+1$ out of the~$m+1$ variables can be non-zero
in an optimal solution. Clearly, such a small number of non-zero
variables is beneficial in a coordinate-wise optimization
method. Moreover, by Lemma~\ref{Lem:ActiveIneq}\,(ii), if two dual
variables corresponding to the same primal variable are non-zero in an
optimal dual solution, then this primal variable will obtain an
integer feasible value in the optimal primal solution.

\section{A coordinate ascent method}
\label{Sec:cd}

We aim at solving the dual problem~\eqref{P:Dsdp} by coordinate-wise
optimization, in order to obtain fast lower bounds to be used inside
the branch-and-bound framework Q-MIST. Our approach is motivated by an
algorithm proposed by Dong~\cite{HongboDong(2014)}. The author
formulates Problem~\eqref{P:Quadratic} as a convex quadratically
constrained problem, and devises a cutting surface procedure based on
diagonal perturbations to construct convex relaxations. The separation
problem turns out to be a semidefinite problem with convex non-smooth
objective function, and it is solved by a primal barrier coordinate
minimization algorithm with exact line search.

The dual Problem~\eqref{P:Dsdp} has a similar structure to the
semidefinite problem solved in~\cite{HongboDong(2014)}, therefore
similar ideas can be applied. Our SDP is more general however, it
contains more general constraints with matrices of rank two (instead
of one) and most of our variables are constrained to be
non-positive. Another difference is that we deal with a very large
number of constraints, out of which only a few are non-zero
however. On the other hand, our objective function is linear, which is
not true for the problem considered in~\cite{HongboDong(2014)}.

As a first step, we introduce a penalty term modelling the
semidefinite constraint~$Q-\mathcal{A}^\top y\succeq 0$ of Problem~\eqref{P:Dsdp}
and obtain
\begin{align}
\label{P:Dsdp-barrier}
\max~~~ \quad f(y;\sigma)&:=\pin{b}{y}+\sigma \log\det(Q-\mathcal{A}^\top y)  \nonumber \\
\st \quad Q-\mathcal{A}^\top y &\succ 0 \\
 y_{0} &\in \Rbb  \nonumber \\
y_{ij}&\leq0 \quad \forall j=l_{i},\dots,u_i, \forall i=1,\dots, n\nonumber
\end{align}
for $\sigma>0$.
The gradient of the objective function of Problem~\eqref{P:Dsdp-barrier} is
\[
\nabla_{y}f(y;\sigma)=b-\sigma\mathcal{A}((Q-\mathcal{A}^\top y)^{-1}).
\]
For the following, we denote $W :=(Q-\mathcal{A}^\top y)^{-1}$, so that
\begin{equation}
\label{gradient}
\nabla_{y}f(y;\sigma)=b-\sigma\mathcal{A}(W)\;.
\end{equation}
We will see later that, using the Sherman-Morrison formula, the
matrix~$W$ can be updated quickly when changing the value of a dual
variable, which is crucial for the performance of the algorithm
proposed. We begin by describing a general algorithm to
solve~\eqref{P:Dsdp-barrier} in a coordinate maximization manner.
In the following, we explain each step of this algorithm in detail.

\begin{algorithm}[H]
\NoCaptionOfAlgo
\caption{\textbf{Outline of a barrier coordinate ascent algorithm for Problem~\eqref{P:Dsdp}}}

\label{Algo:general}
 \textbf{Starting point:} choose any feasible solution~$y$ of~\eqref{P:Dsdp-barrier}\;
 \textbf{Direction:} choose a coordinate direction $e_{ij}$\;
 \textbf{Step size:} using exact line search, determine the step length~$s$\;
 \textbf{Move along chosen coordinate:} $y \gets y+se_{ij}$\;
 \textbf{Update} the matrix $W$ accordingly\;
 \textbf{Decrease} the penalty parameter $\sigma$\;
 \textbf{Go to (2)}, unless some stopping criterion is satisfied\;
\end{algorithm}

\subsection{Definition of a starting point}

If $Q\succ0$, we can safely choose $y^{(0)}=0$ as starting
point. Otherwise, define~$a\in\Rbb^n$ by~$a_i=(A_{iu_i})_{0i}$ for
$i=1,\dots,n$. Moreover, define
\begin{align*}
\tilde{y}&:=\min\{\lambda_{min}(\hat Q)-1,0\}, \\
y_0&:=\hat c-\tilde y \sum_{i=1}^n(1+l_iu_i)-1-(\tfrac{1}{2}\hat l-\tilde{y}a)^{\top}(\tfrac{1}{2}\hat l-\tilde{y}a),
\end{align*}
and $y^{(0)}\in\Rbb^{m+1}$ as
\[
y^{(0)}:=
\begin{pmatrix}
y_{0}\\
(y_{ij})_{j\in \{l_i,\dots,u_i\}, i\in \{1, \dots, n\}}
\end{pmatrix},
\quad
y_{ij}= 
\begin{cases}
\tilde{y}, &  j=u_i, i=1,\dots, n\\
0, & \text{otherwise.}
\end{cases}
\]
Then the following lemma holds; the proof can be found in Appendix~\ref{app:feas}.
\begin{lemma}
  \label{lem:initial}
  The vector~$y^{(0)}$ is feasible for \eqref{P:Dsdp-barrier}.
\end{lemma}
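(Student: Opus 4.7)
The plan is to verify both types of constraints in \eqref{P:Dsdp-barrier} separately. The sign constraints $y^{(0)}_{ij}\le 0$ are immediate: by construction $\tilde y\le 0$, so $y^{(0)}_{iu_i}=\tilde y\le 0$ and $y^{(0)}_{ij}=0$ for $j<u_i$, while $y_0\in\Rbb$ is unrestricted.

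The core of the proof is to establish the strict positive definiteness of $Q-\mathcal{A}^\top y^{(0)}$. Since only the components $y_0$ and $y_{iu_i}$ of $y^{(0)}$ are nonzero, I have $\mathcal{A}^\top y^{(0)}=y_0 A_0+\tilde y\sum_{i=1}^n A_{iu_i}$. Reading off the entries of $A_{iu_i}$ from the inequality $(1+l_iu_i)x_{00}+x_{ii}-(l_i+u_i)x_{0i}\le 1$ and using $a_i=(A_{iu_i})_{0i}=-(l_i+u_i)/2$, this gives the block form
\[
Q-\mathcal{A}^\top y^{(0)}=
\begin{pmatrix}
\hat c-y_0-\tilde y\sum_{i=1}^n(1+l_iu_i) & \tfrac{1}{2}\hat l^\top-\tilde y a^\top\\
\tfrac{1}{2}\hat l-\tilde y a & \hat Q-\tilde y I
\end{pmatrix}.
\]

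Next I apply the Schur complement criterion with respect to the lower-right block. The choice $\tilde y\le\lambda_{\min}(\hat Q)-1$ gives $\lambda_{\min}(\hat Q-\tilde y I)\ge 1$, so this block is positive definite and $(\hat Q-\tilde y I)^{-1}\preceq I$. Writing $v:=\tfrac{1}{2}\hat l-\tilde y a$, the Schur complement equals
\[
\hat c-y_0-\tilde y\sum_{i=1}^n(1+l_iu_i)-v^\top(\hat Q-\tilde y I)^{-1}v,
\]
and substituting the chosen value of $y_0$ collapses the first three terms exactly to $1+v^\top v$. The bound $(\hat Q-\tilde y I)^{-1}\preceq I$ then yields $v^\top(\hat Q-\tilde y I)^{-1}v\le v^\top v$, so the Schur complement is at least $1>0$. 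This proves $Q-\mathcal{A}^\top y^{(0)}\succ 0$, completing the feasibility check.

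The only real obstacle is bookkeeping, namely assembling the block matrix correctly from the definitions of $A_0$ and $A_{iu_i}$; everything else follows from the standard Schur complement identity, engineered so that the choice of $\tilde y$ handles the lower-right block and the choice of $y_0$ reduces the Schur complement to $1+v^\top v-v^\top(\hat Q-\tilde y I)^{-1}v\ge 1$.
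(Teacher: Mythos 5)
Your proof is correct. It uses the same block decomposition of $Q-\mathcal{A}^\top y^{(0)}$ as the paper, but you take the Schur complement with respect to the lower-right block $\hat Q-\tilde y I$, whereas the paper pivots on the scalar $(0,0)$ entry $\tilde c=1+v^\top v$ and then must show $(\hat Q-\tilde y I)-\tfrac{1}{\tilde c}vv^\top\succ 0$ via the eigenvalue bound $\lambda_{\min}(A+B)\ge\lambda_{\min}(A)+\lambda_{\min}(B)$ together with $\lambda_{\max}(vv^\top)=v^\top v$, arriving at $\lambda_{\min}(\hat Q)-\tilde y-\frac{v^\top v}{1+v^\top v}>0$. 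Your variant replaces that eigenvalue computation by the observation that $\tilde y\le\lambda_{\min}(\hat Q)-1$ forces $\hat Q-\tilde y I\succeq I$, hence $(\hat Q-\tilde y I)^{-1}\preceq I$ and $v^\top(\hat Q-\tilde y I)^{-1}v\le v^\top v$, so the scalar Schur complement is at least $1$. Both arguments exploit exactly the same design of $\tilde y$ (to push the lower-right block past the identity) and of $y_0$ (to make the corner entry equal $1+v^\top v$); yours is arguably a bit cleaner since it avoids handling the rank-one perturbation $vv^\top$ explicitly, while the paper's version makes the slack in the minimal eigenvalue quantitatively visible. Your side remark that $a_i=-(l_i+u_i)/2$ is consistent with the symmetric-matrix convention for $\pin{A_{ij}}{X}$, though it is not needed, since the paper only uses $a_i=(A_{iu_i})_{0i}$ symbolically.
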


\subsection{Choice of an ascent direction}
\label{sec:direction}

We improve the objective function coordinate-wise: at each
iteration~$k$ of the algorithm, we choose an ascent
direction~$e_{ij^{(k)}}\in\Rbb^{m}$ where~$ij^{(k)}$ is the coordinate
of the gradient with maximum absolute value
\begin{equation}
\label{Eqn:choosecoord}
ij^{(k)}:=\argmax_{ij}|\nabla_{y}f(y;\sigma)_{ij}|\;.
\end{equation}
However, moving a coordinate~$ij$ to a positive direction is allowed
only if~$y_{ij}<0$, so that the coordinate~$ij^{(k)}$
in~\eqref{Eqn:choosecoord} has to be chosen among those satisfying
\begin{equation*}
(\nabla_{y}f(y;\sigma)_{ij}>0 \text{ and } y_{ij}<0) \quad
\text{ or }
\quad\nabla_{y}f(y;\sigma)_{ij}<0\;.
\end{equation*}
The entries of the gradient depend on the type of
inequality. By~\eqref{gradient}, we have
\begin{align*}
\nabla_{y}f(y;\sigma)_{ij}&=1-\sigma \pin{W}{A_{ij}}.
\end{align*}
The number of lower bounding facets for a single primal variable~$i$
is~$u_i-l_i$, which is not polynomial in the input size from a
theoretical point of view. From a practical point of view, a large
domain~$D_i$ may slow down the coordinate selection if all potential
coordinates have to be evaluated explicitly.

However, the regular structure of the gradient entries corresponding
to lower bounding facets for variable~$i$ allows to limit the search
to at most two candidates per variable. To this end, we define the
function
$$\varphi_i(j):=1-\sigma \pin{W}{A_{ij}}=1-\sigma\big((1-j(j+1))W_{00}+(2j+1)W_{i0} -W_{ii}\big)$$
and aim at finding a minimizer of~$|\varphi|$ over~$\{l_i,\dots,u_i-1\}$.
As $\varphi_i$ is a univariate quadratic function, we can restrict our
search to at most three candidates, namely the bounds~$l_i$
and~$u_i-1$ and the rounded global minimizer of~$\varphi_i$, if it
belongs to~$l_i,\dots,u_i-1$; the latter is
\[
\left\lceil\tfrac{W_{i0}}{W_{00}}-\tfrac{1}{2}\right\rfloor\;.
\]
In summary, taking into account also the upper bounding facets and the
coordinate zero, we need to test at most~$4n+1$ candidates in order to
solve~\eqref{Eqn:choosecoord}, independently of the bounds~$l_i$ and~$u_i$.

\subsection{Computation of the step size} 
\label{sec:step}

We compute the step size~$s^{(k)}$ by exact line search in the chosen 
direction. For this, we need to solve the following one-dimensional 
maximization problem
\begin{equation}
\label{P:Linesearch}
s^{(k)}= \argmax_{s}\{f(y^{(k)} +se_{ij^{(k)}};\sigma)\mid 
Q-\mathcal{A}^\top (y^{(k)}+se_{ij^{(k)}}) \succ 0 , s\leq -y_{ij^{(k)}} \}
\end{equation}
unless the chosen coordinate is zero, in which case the upper bound
on~$s$ is dropped. 
Note that~$s\mapsto f(y^{(k)} +se_{ij^{(k)}};\sigma)$ is
strictly concave on
$$\{s\in\Rbb\mid Q-\mathcal{A}^\top (y^{(k)}+se_{ij^{(k)}}) \succ
0\}\;.$$
 By the first order optimality conditions, we thus need to find
the unique~$s^{(k)}\in\Rbb$ satisfying the semidefinite constraint
$Q-\mathcal{A}^\top (y^{(k)}+s^{(k)}e_{ij^{(k)}}) \succ 0$ such that
either
\[
\nabla_{s}f(y^{(k)} +s^{(k)}e_{ij^{(k)}};\sigma)=0 
\quad\text {and}\quad y_{{ij}^{(k)}}+s^{(k)}\leq 0
\]
or 
\[
\nabla_{s}f(y^{(k)} +s^{(k)}e_{ij^{(k)}};\sigma)>0
\quad\text {and}\quad s^{(k)}=- y_{ij^{(k)}}^{(k)}.
\]
In order to simplify the notation, we omit the superindex~$(k)$ in the
following. From the definition,
\begin{align*}
f(y +se_{ij};\sigma)&= \pin{b}{y}+s\pin{b}{e_{ij}}
+\sigma\log\det(Q-\mathcal{A}^\top y -s\mathcal{A}^\top (e_{ij}))\\
&=\pin{b}{y}+s+\sigma\log\det(W^{-1} -sA_{ij}).
\end{align*}
Then, the gradient with respect to~$s$ is
\begin{equation}
\label{Eq:gradient}
\nabla_{s}f(y +se_{ij};\sigma)=1-\sigma\pin{A_{ij}}{(W^{-1} -sA_{ij})^{-1}}.
\end{equation}
Now the crucial task is to compute the inverse of the matrix
$W^{-1} -sA_{ij}$, which is of dimension~$n+1$. For this purpose,
notice that~$W^{-1}$ is changed by a rank-one or rank-two
matrix~$sA_{ij}$; see Lemma~\ref{lemma:rank}. Therefore, we can
compute both the inverse
matrix~$(W^{-1} -sA_{ij})^{-1}$ and the optimal step length
by means of the Sherman-Morrison formula 
for the rank-one or rank-two update; see Appendix~\ref{app1}.

Finally, we have to point out that the zero coordinate can also 
be chosen as ascent direction, in that case the gradient is
\[
\nabla_{s}f(y +se_0;\sigma)=1-\sigma\pin{A_0}{(W^{-1} -sA_0)^{-1}},
\]
and the computation of the step size is analogous.

\subsection{Algorithm overview}

Our approach to solve Problem~\eqref{P:Dsdp} is summarized in
Algorithm~\ref{algo:cd}.

\begin{algorithm}[H]
\SetAlgoRefName{CD}
\DontPrintSemicolon
\KwIn{$Q\in \Rbb^{(n+1)\times(n+1)}$}
\KwOut{A lower bound on the optimal value of Problem~\eqref{P:SDP-matrix}}
Use Lemma~\ref{lem:initial} to compute $y^{(0)}$ such that $Q-\mathcal{A}^\top y^{(0)} \succ 0$\;
Compute $W^{(0)} \gets (Q-\mathcal{A}^\top y^{(0)})^{-1}$\;
\For{$k \gets 0$ \textbf{until} max-iterations} {
Choose a coordinate direction $e_{ij^{(k)}}$ as described in Section~\ref{sec:direction}\;
Compute the step size $s^{(k)}$ as described in Section~\ref{sec:step}\label{cd:algo:step5}\;
Update $y^{(k+1)}\gets y^{(k)}+s^{(k)}e_{ij^{(k)}}$\label{cd:algo:step6}\;
Update $W^{(k)}$ using the Sherman-Morrison formula \;
Update $\sigma$\label{cd:algo:step8}\;
Terminate if some stopping criterion is met\label{cd:algo:step9}\;
}
\Return{$\pin{b}{y^{(k)}}$}\;
\caption{Barrier coordinate ascent algorithm for Problem~\eqref{P:Dsdp}}
\label{algo:cd}
\end{algorithm}

Before entering the main loop, the running time of
Algorithm~\ref{algo:cd} is dominated by the computation of the minimum
eigenvalue of~$\hat Q$ needed to compute~$y^{(0)}$ and by the
computation of the inverse matrix of~$Q-\mathcal{A}^\top
y^{(0)}$. Both can be done in~$O(n^3)$ time. Each iteration of the
algorithm can be performed in~$O(n^2)$. Indeed, as discussed in
Section~\ref{sec:direction}, we need to consider~$O(n)$ candidates for
the coordinate selection, so that this task can be performed
in~$O(n^2)$ time. For calculating the step size and updating the
matrix~$W^{(k)}$, we also need~$O(n^2)$ time using the
Sherman-Morrison formula.

Notice that the algorithm produces a feasible solution~$y^{(k)}$ of
Problem~\eqref{P:Dsdp} at every iteration and hence a valid lower
bound~$\pin{b}{y^{(k)}}$ for Problem~\eqref{P:SDP-matrix}. In
particular, when used within a branch-and-bound algorithm, this means
that Algorithm~\ref{algo:cd} can be stopped as soon
as~$\pin{b}{y^{(k)}}$ exceeds a known upper bound for
Problem~\eqref{P:SDP-matrix}.
Otherwise, the algorithm can be stopped after a fixed number of
iterations or when other criteria show that only a small further
improvement of the bound can be expected.

The choice of an appropriate termination rule however is closely
related to the update of~$\sigma$ performed in
Step~\ref{cd:algo:step8}. The aim is to find a good balance between
the convergence for fixed~$\sigma$ and the decrease of~$\sigma$. In
our implementation, we use the following rule: whenever the entry of the
gradient corresponding to the chosen coordinate has an absolute value
below~$0.01$, we multiply~$\sigma$
by~$0.25$. As soon as $\sigma$ falls below~$10^{-5}$, we fix it to
this value.

\subsection{Two-dimensional update}
\label{Sec:Simultaneus}

In Algorithm~\ref{algo:cd}, we change only one coordinate in each
iteration, as this allows to update the matrix~$W^{(k)}$ in $O(n^2)$
time using the Sherman-Morrison formula. This was due to the fact that
all constraint matrices in the primal SDP~\eqref{P:SDP-matrix} have
rank at most two. However, taking into account the special structure
of the constraint matrix~$A_0$, one can see that every linear
combination of any constraint matrix~$A_{ij}$ with~$A_0$ still has
rank at most two. In other words, we can simultaneously update the
dual variables~$y_0$ and~$y_{ij}$ and still recompute~$W^{(k)}$ in~$O(n^2)$ time.

In order to improve the convergence of Algorithm~\ref{algo:cd}, we
choose a coordinate~$ij$ as explained in Section~\ref{sec:direction}
and then perform an exact plane-search in the two-dimensional space
corresponding to the directions~$e_0$ and~$e_{ij}$, i.e., we
solve the bivariate problem
\begin{equation}
\label{P:simultLinesearch}
\argmax_{(s_0,s)}\,\{f(y+s_0e_0+se_{ij};\sigma)\mid
Q-\mathcal{A}^\top (y+s_0e_0+se_{ij}) \succ 0 , s\leq -y_{ij} \}\;,
\end{equation}
where we again omit the superscript~$(k)$ for sake of readibilty.
Similar to the one-dimensional case in~\eqref{P:Linesearch},
due to strict concavity of~$(s_0,s)\mapsto f(y+s_0e_0
+se_{ij};\sigma)$ over~$\{(s_0,s)\in\Rbb^2\mid Q-\mathcal{A}^\top
(y+s_0e_0+se_{ij}) \succ 0\}$, solving~\eqref{P:simultLinesearch} is
equivalent to finding the unique pair~$(s_0,s)\in\Rbb^2$ such that
$$\nabla_{s_0}f(y+s_0e_0+se_{ij};\sigma)=0$$
and either
\begin{equation*}
\nabla_{s}f(y+s_0e_0+se_{ij};\sigma)=0
\quad\text {and}\quad y_{ij}+s\leq 0
\end{equation*}
or 
\begin{equation*}
\nabla_{s}f(y+s_0e_0+se_{ij};\sigma)>0
\quad\text {and}\quad s=- y_{ij}.
\end{equation*}
To determine~$(s_0,s)$, it thus suffices to set both
gradients to zero and solve the resulting two-dimensional system of
equations. If it turns out that~$y_{ij}+s>0$, we
fix~$s:=- y_{ij}$ and recompute~$s_0$ by solving
\[
\nabla_{s_0}f(y+s_0e_0+se_{ij};\sigma) = 0.
\]
Proceeding as before, we have
\begin{align*}
f(y +s_0e_0 +se_{ij};\sigma)
&=\pin{b}{y}+s_0+ s+\sigma\log\det(W^{-1} -s_0A_0-sA_{ij}),
\end{align*}
and the gradients with respect to $s_0$ and $s$ are
\begin{align*}
\nabla_{s_0}f(y +s_0e_0+se_{ij};\sigma) = 1-\sigma\pin{A_0}{(W^{-1} -s_0A_0-sA_{ij})^{-1}}\\
\nabla_{s}f(y +s_0e_0+se_{ij};\sigma) = 1 -\sigma\pin{A_{ij}}{(W^{-1} -s_0A_0-sA_{ij})^{-1}}\;.
\end{align*}
The matrix $s_0A_0+sA_{ij}$ is of rank two; replacing~$(W^{-1}
-s_0A_0-sA_{ij})^{-1}$ by the Sherman-Morrison formula and setting the
gradients to zero, we obtain a system of two quadratic equations. For
details, see Appendix~\ref{app2}.
Using these ideas, a slightly different version of
Algorithm~\ref{algo:cd} is obtained by changing
Steps~\ref{cd:algo:step5} and~\ref{cd:algo:step6} adequately, which we
call Algorithm~CD2D.

\section{Experiments}
\label{Sec:Experiments}

For our experiments, we generate random instances in the same way as
proposed in~\cite{BuchheimWiegele(2013)}: the objective matrix is
$\hat Q = \sum_{i=1}^n\mu_iv_iv_i^\top$, where the $n$ numbers $\mu_i$
are chosen as follows: for a given value of $p\in[0,100]$, the first
$\nicefrac{pn}{100}$ $\mu_i$'s are generated uniformly from $[-1,0]$
and the remaining ones from $[0,1]$. Additionally, we generate $n$
vectors of dimension~$n$, with entries uniformly at random from
$[-1,1]$, and orthonormalize them to obtain the vectors $v_i$. The
parameter~$p$ represents the percentage of negative eigenvalues, so
that $\hat Q$ is positive semidefinite for $p=0$, negative
semidefinite for $p=100$ and indefinite for any other value~$p\in
(0,100)$. The entries of the vector~$\hat l$ are generated uniformly at
random from~$[-1,1]$, and~$\hat c=0$. In this paper, we restrict our
evaluation to ternary instances, i.e., instances with~$D_i=\{-1,0,1\}$.

We evaluate the performance of both Algorithms~\ref{algo:cd} and~CD2D
in the root node of the branch-and-bound tree and compare them with
CSDP, the SDP solver used in~\cite{BuchheimWiegele(2013)}. Our
experiments were performed on an Intel Xeon processor running at
2.5\;GHz. Algorithms~\ref{algo:cd} and~CD2D were implemented in C++,
using routines from the LAPACK package only in the initial phase for computing a
starting point and the inverse matrix~$W^{(0)}$.

The main motivation to consider a fast coordinate ascent method was to
obtain quick and good lower bounds for the quadratic integer
problem~\eqref{P:Quadratic}. We are thus interested in the improvement
of the lower bound over time. In Figure~\ref{Fig:TernatyLB}, we
plotted the lower bounds obtained by CSDP and by the
algorithms~\ref{algo:cd} and~CD2D in the root node for two ternary
instances of size~$n=100$, for the two values $p=0$
and~$p=100$. Notice that we use a log scale for the $y$-axis.
\begin{figure}[h!]
\centering
\includegraphics[scale=0.65]{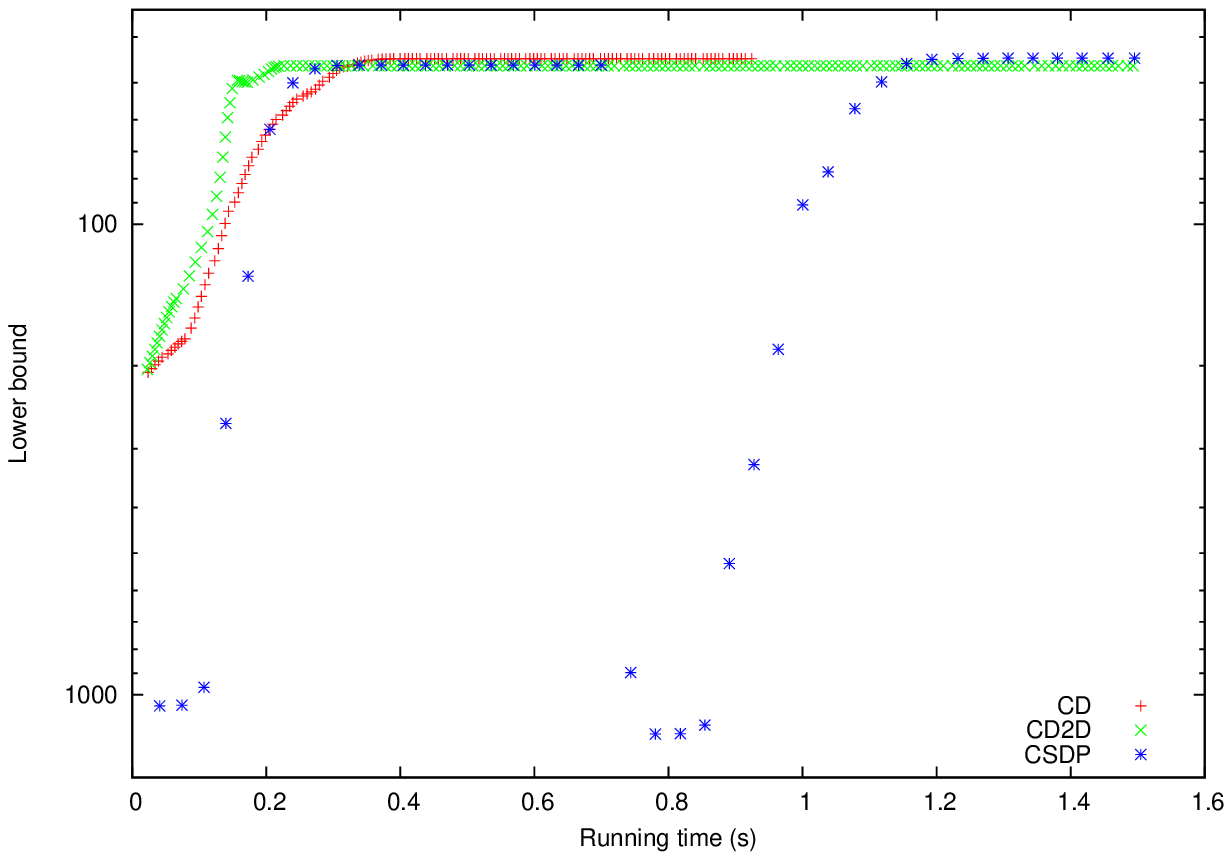}
\includegraphics[scale=0.65]{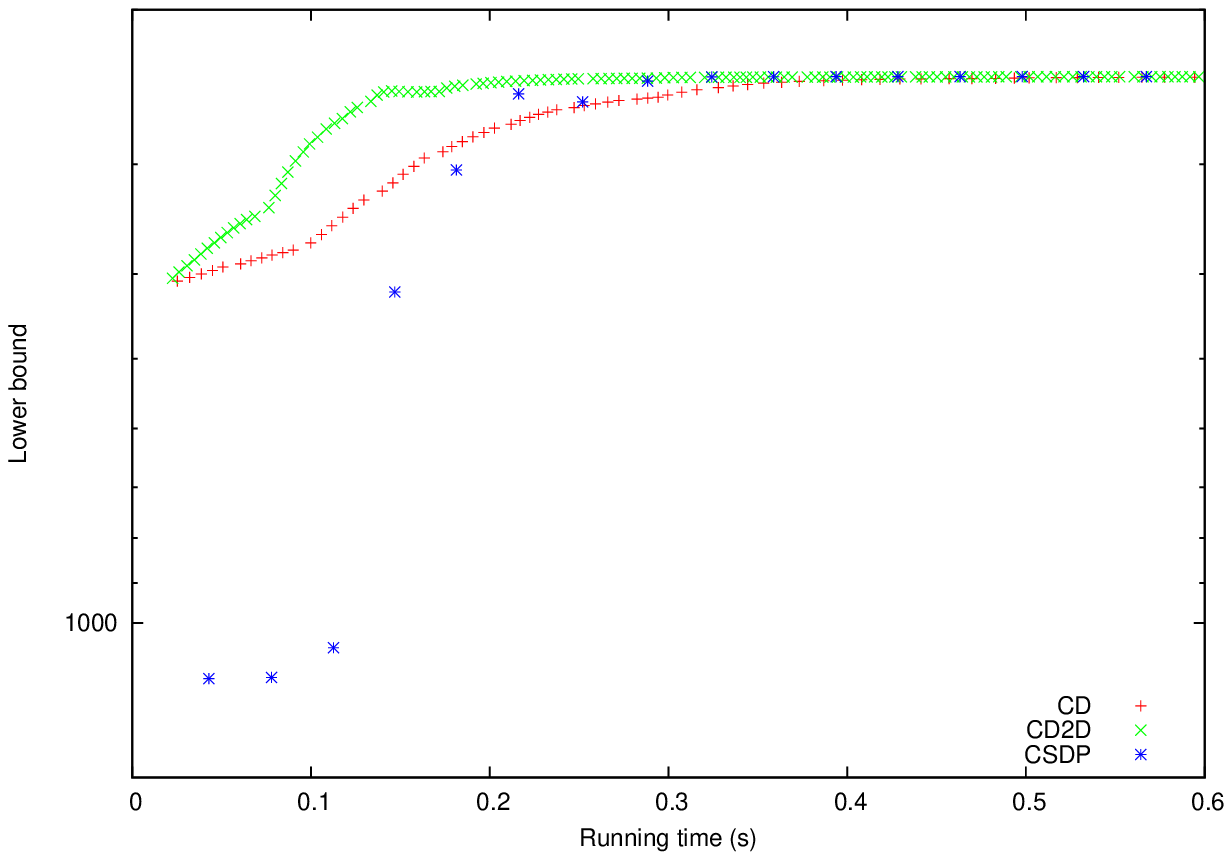}
\caption{\label{Fig:TernatyLB} \small Comparison of the lower bounds
  in the root node obtained by Q-MIST with CSDP, CD and CD2D; for
  $p=0$ (top) and $p=100$ (bottom)}
\end{figure}

From Figure~\ref{Fig:TernatyLB}, we see that Algorithm~CD2D
clearly dominates both other approaches: the lower bound it produces exceeds the
other bounds until all approaches come close to the optimum
of~\eqref{P:SDP-relax}. This is true in particular for the instance
with~$p=100$. Even Algorithm~\ref{algo:cd} is stronger than CSDP in the
beginning, but then CSDP takes over.
Note that the computation of the root bound for the instance shown in
Figure~\ref{Fig:TernatyLB}\;(a) involves one re-optimization due to
separation. For this reason, the lower bound given by CSDP has to
restart with a very weak value.

As a next step, we will integrate the Algorithm~CD2D into the
branch-and-bound framework of Q-MIST. We are confident that this will
improve the running times of Q-MIST significantly when choosing the
stopping criteria carefully. This is left as future work.

\bibliographystyle{plain}
\bibliography{Base1}

\newpage

\appendix

\section{Feasible starting point}
\label{app:feas}

\begin{proof} (of Lemma~\ref{lem:initial})
  We have $y^{(0)}_{ij}\leq0$ by construction, so it remains to
  show that~$Q-\mathcal{A}^\top y^{(0)}\succ0$. To this end, first note that
\begin{equation}\label{ctilde}
\tilde c:=\hat c-y_{0}-\tilde y
\sum_{i=1}^n(1+l_iu_i)=1+(\tfrac{1}{2}\hat l-\tilde{y}a)^{\top}(\tfrac{1}{2}\hat l-\tilde{y}a)>
0\;.
\end{equation}
By definition,
\begin{align*}
\label{Eq:001}
Q-\mathcal{A}^\top y^{(0)} &=Q-y_0A_0-\tilde y\sum_{i=1}^nA_{iu_i} 
\\ &= Q-y_0A_0-\tilde y
\begin{pmatrix}
\sum_{i=1}^n(1+l_iu_i) & a^\top\\
a & I_n
\end{pmatrix}
\\ &=
\begin{pmatrix}
\tilde c & (\tfrac{1}{2}\hat l-\tilde{y}a)^\top\\
\tfrac{1}{2}\hat l-\tilde{y}a & \hat Q-\tilde yI_{n}
\end{pmatrix},
\end{align*}
which by Schur complement and~\eqref{ctilde} is positive definite if
\[
(\hat Q -\tilde y I)-\tfrac{1}{\tilde c}(\tfrac{1}{2}\hat l-\tilde{y}a)(\tfrac{1}{2}\hat l-\tilde{y}a)^{\top}\succ 0.
\]
Denoting~$B:=(\tfrac{1}{2}\hat l-\tilde{y}a)(\tfrac{1}{2}\hat l-\tilde{y}a)^{\top}$,
we have
$$\lambda_{max}(B)=(\tfrac{1}{2}\hat l-\tilde{y}a)^{\top}(\tfrac{1}{2}\hat l-\tilde{y}a)\geq0$$
and thus
\begin{align*}
\lambda_{min}\left((\hat Q -\tilde y I_n)-\tfrac{1}{\tilde c}B\right)
&\geq\lambda_{min}(\hat Q -\tilde y I_n)+\tfrac{1}{\tilde c}\lambda_{min}(-B)\\
&=\lambda_{min}(\hat Q) -\tilde
y-\frac{\lambda_{max}(B)}{1+\lambda_{max}(B)}>0
\end{align*}
by definition of~$\tilde y$.
\qed
\end{proof}

\section{Computation of the step size}

\subsection{One-dimensional problem}\label{app1}

We need to find the value of~$s$ such that the gradient in 
\eqref{Eq:gradient} is zero. For this we need to solve the following equation:
\begin{equation}
\label{Eq:gradientzero}
1-\sigma\pin{A_{ij}}{(W^{-1} -sA_{ij})^{-1}}=0.
\end{equation}
Notice that each constraint matrix $A_{ij}$ can be factored as follows:
\[
A_{ij}=E_{ij}IC_{ij},
\]
where $E_{ij}\in \Rbb^{(n+1)\times2}$, defined by $E_{ij}:=(e_{0}\; e_{i})$, $e_{0},
e_{i}\in \Rbb^{n+1}$, $C\in \Rbb^{2\times(n+1)}$ defined by $C:=(A_{ij})_{\{0,i\},\{0,\dots,n\}}$ and $I$ is the $2\times2$-identity matrix. 
As mentioned in Section~\ref{sec:step}, the inverse 
matrix~$(W^{-1} -sA_{ij})^{-1}$ can be computed using the Sherman-Morrison
formula
as follows: 
\[
(W^{-1} -sA_{ij})^{-1}=(W^{-1} -sE_{ij}IC_{ij})^{-1}
=W+WE_{ij}(\tfrac{1}{s}I+C_{ij}WE_{ij})^{-1}C_{ij}W\;.
\]
Notice that the matrix $\tfrac{1}{s}I+C_{ij}WE_{ij}$ is a $2\times2$-matrix,
so its inverse can be easily computed. 
Replacing the inverse in~\eqref{Eq:gradientzero}, we get
\[
1-\sigma\pin{A_{ij}}{W}-\sigma\pin{A_{ij}}{WE_{ij}(\tfrac{1}{s}I+C_{ij}WE_{ij})^{-1}C_{ij}W}=0.
\]
Due to the sparsity of the constraint matrices $A_{ij}$, the inner matrix product 
is simplified a lot, in fact we have to compute only the entries $00$, $0i$,
$0i$ and $ii$ of the  matrix product
$WE_{ij}(\tfrac{1}{s}I+C_{ij}WE_{ij})^{-1}C_{ij}W$. We arrive at a quadratic 
equation in $s$, namely 
\[
as^2+bs+c=0,
\]
where 
\begin{eqnarray*}	 
a & = & -(A_{ij})_{0i}^2w_{0i}^2 + (A_{ij})_{00}(A_{ij})_{ii}w_{0i}^2 
+(A_{ij})_{0i}(A_{ij})_{0i}w_{00}w_{ii}\\
&&-(A_{ij})_{00}(A_{ij})_{ii}w_{00}w_{ii},\\
b & = & (A_{ij})_{00} w_{00} +2 (A_{ij})_{0i} w_{0i} 
- 2 \sigma (A_{ij})_{0i}^2 w_{0i}^2 + 2 \sigma (A_{ij})_{00} (A_{ij})_{ii}  w_{0i}^2 \\
&&+ (A_{ij})_{ii} w_{ii} +2\sigma (A_{ij})_{0i}^2w_{00} w_{ii} -2\sigma (A_{ij})_{00} (A_{ij})_{ii} w_{00} w_{ii},\\
c &=& -1 + \sigma (A_{ij})_{00} w_{00} +2 \sigma (A_{ij})_{0i} w_{0i}+  \sigma (A_{ij})_{ii} w_{ii}.
\end{eqnarray*}
Finally, $s$ is obtained using the well-known formula for the roots of a
general quadratic equation.

The computation of the step size becomes simpler if the chosen
coordinate direction corresponds to~$y_0$. We then need to find a
solution of the equation
\begin{equation}
\label{eq:shermanmorrisony0}
1-\sigma\pin{A_0}{(W^{-1} -sA_0)^{-1}}=0.
\end{equation}
The inverse of $W^{-1} -sA_0$ is represented using the
Sherman-Morrison formula for rank-one,
\begin{align*}
(W^{-1} -sA_{0})^{-1} =(W^{-1} - se_{0}e_{0}^\top)^{-1} 
= W-\frac{s}{1+sw_{ii}} (We_{i}) (We_{i})^\top.
\end{align*}
Using this to solve \eqref{eq:shermanmorrisony0}, we obtain the step size
\[
s=\frac{1}{w_{ii}}-\sigma.
\]
A similar formula for the step size is obtained for other cases when
the constraint matrix $A_{ij}$ has rank one.

\subsection{Two-dimensional problem}\label{app2}

We write $s_0A_0+sA_{ij}= E_{ij}IC_{ij}$, where $E_{ij} = (e_0 \;e_i)\in
\Rbb^{(n+1)\times 2}$, and
\[
C_{ij} =
\begin{pmatrix}
s_0+s(A_{ij})_{00} & \dots & s(A_{ij})_{0i} & \dots \\
s(A_{ij})_{0i} & \dots & s(A_{ij})_{ii} & \dots 
\end{pmatrix}
\in\Rbb^{2\times(n+1)}.
\]
To compute the inverse matrix $(W^{-1} -s_0A_0-sA_{ij})^{-1}$ we use the
Sherman-Morrison formula again, obtaining
\[
(W^{-1} -s_0A_0-sA_{ij})^{-1} = 
(W^{-1} -E_{ij}IC_{ij})^{-1} = W+WE_{ij}(I+C_{ij}WE_{ij})^{-1}C_{ij}W.
\]
Substituting this in the gradients and setting them to zero, we obtain the 
following system of two quadratic equations
\begin{eqnarray*}
\sigma\pin{A_0}{(W^{-1} -s_0A_0-sA_{ij})^{-1}} &=& 1\\
\sigma\pin{A_{ij}}{(W^{-1} -s_0A_0-sA_{ij})^{-1}} &=& 1,
\end{eqnarray*}
the solutions of which are
$(s_0',s')$ and $(s_0'',s'')$ given as follows:
\begin{align*}
\begin{split}
s_0'& = -(-4 (A_{ij})_{0i}^3 (A_{ij})_{ii} w_{0i} w - 4 \alpha (A_{ij})_{0i} (A_{ij})_{ii}^2 w_{0i} w -  4 (A_{ij})_{0i}^4 w_{00} w \\
&\phantom{=~}+ 2 (A_{ij})_{0i}^2 (3 (A_{ij})_{00} (A_{ij})_{ii} w_{00} w - 2 (A_{ij})_{ii}  w_{00} w - (A_{ij})_{ii}^2 w (w_{ii} + \sigma w) \\
&\phantom{=~} +\rho) + (A_{ij})_{ii} (-2 (A_{ij})_{00} (A_{ij})_{00} (A_{ij})_{ii} w_{00} w + 2 (A_{ij})_{00} ((A_{ij})_{ii}  w_{00} w\\
&\phantom{=~}-(A_{ij})_{ii}^2 w (w_{ii}+\sigma w) + \rho) +  (-(A_{ij})_{ii}^2 w (2 w_{ii} + \sigma w) +\rho)))/\delta,\\
s' & = (2 (A_{ij})_{0i}^2 w_{00} w_{0i}^2- 2 (A_{ij})_{00} (A_{ij})_{ii} w_{00} w_{0i}^2+ 2 (A_{ij})_{ii}  w_{00} w_{0i}^2+\sigma (A_{ij})_{ii}^2 w_{0i}^4\\
&\phantom{=~}-2 (A_{ij})_{0i}^2 w_{00}^2 w_{ii} + 2 (A_{ij})_{00} (A_{ij})_{ii} w_{00}^2 w_{ii} -2 (A_{ij})_{ii}  w_{00}^2 w_{ii} \\
&\phantom{=~}- 2\sigma (A_{ij})_{ii}^2  w_{00} w_{0i}^2 w_{ii} \sigma (A_{ij})_{ii}^2 w_{00}^2 w_{ii}^2-\rho)/\delta,\\
s_0'' & = (4 (A_{ij})_{0i}^4 w_{00} w + 4 (A_{ij})_{0i}^3 (A_{ij})_{ii} w_{0i} w +4\alpha (A_{ij})_{0i} (A_{ij})_{ii}^2 w_{0i} w \\
&\phantom{=~}+2 (A_{ij})_{0i}^2 (2 (A_{ij})_{ii}  w_{00} w- 3 (A_{ij})_{00} (A_{ij})_{ii} w_{00}w + (A_{ij})_{ii}^2 w (w_{ii} + \sigma w)\\
&\phantom{=~} +\rho) - (A_{ij})_{ii} (-2 (A_{ij})_{00}^2 (A_{ij})_{ii} w_{00} w+ 2 (A_{ij})_{00} ((A_{ij})_{ii}  w_{00} w\\
&\phantom{=~} +(A_{ij})_{ii}^2 w (w_{ii} + \sigma w) + \rho) -  ((A_{ij})_{ii}^2 w (2 w_{ii}+ \sigma w) + \rho)))/\delta,\\
s'' & = (2 (A_{ij})_{0i}^2 w_{00} w_{0i}^2- 2 (A_{ij})_{00} (A_{ij})_{ii} w_{00} w_{0i}^2+ 2 (A_{ij})_{ii}  w_{00} w_{0i}^2+ \sigma (A_{ij})_{ii}^2 w_{0i}^4\\
&\phantom{=~}-2 (A_{ij})_{0i}^2 w_{00}^2 w_{ii} + 2 (A_{ij})_{00} (A_{ij})_{ii} w_{00}^2 w_{ii} -2 (A_{ij})_{ii}  w_{00}^2 w_{ii} \\
&\phantom{=~}- 2 \sigma (A_{ij})_{ii}^2 w_{00} w_{0i}^2 w_{ii} + \sigma (A_{ij})_{ii}^2 w_{00}^2 w_{ii}^2+\rho)/\delta.
\end{split}
\end{align*}
Here we set
\begin{align*}
w &= w_{0i}^2 -w_{00}w_{ii},\\
\alpha &= -(A_{ij})_{00}+1,\\
\begin{split}
\rho^2 &= w^2 (4 (A_{ij})_{0i}^4 w_{00}^2 + 8 (A_{ij})_{0i}^3 (A_{ij})_{ii} w_{00} w_{0i} + 8\alpha(A_{ij})_{0i} (A_{ij})_{ii}^2  w_{00} w_{0i} \\
&\phantom{=~}+ 4 (A_{ij})_{0i}^2 (A_{ij})_{ii} (\alpha w_{00}^2+ (A_{ij})_{ii} w_{0i}^2) +(A_{ij})_{ii}^3 (-4 (A_{ij})_{00} w_{0i}^2 +4  w_{0i}^2\\
&\phantom{=~}+ \sigma^2(A_{ij})_{ii}  w^2)),
\end{split}
\\
\delta &= -2 (A_{ij})_{ii}^2 ((A_{ij})_{0i}^2 + \alpha(A_{ij})_{ii}) w^2.
\end{align*}

\end{document}